\newtheorem*{example}{Examples}
\begin{document}

\title[\hfilneg 2025/10\hfil Fourth-order MEMS/NEMS models II]
{Global bifurcation curves for fourth-order MEMS/NEMS models II}
\author[M. Lin \& H. Pan\hfil 2025/10\hfilneg]{Manting Lin and Hongjing Pan$^*$} 
         \address{School of Mathematical Sciences,
	South China Normal University,\\ Guangzhou 510631, P. R. China}
  \email{linmt@m.scnu.edu.cn \& panhj@m.scnu.edu.cn}
     \thanks{${}^*$Corresponding author.}

\subjclass[2020]{34B18, 34C23, 74G35, 74K10}
\keywords{Global bifurcation, Exact multiplicity, A priori estimate, Continuation method, MEMS, clamped boundary, Singularity, Turning point}

\begin{abstract}
Global solution curve and exact multiplicity of positive solutions for a class of fourth-order
semilinear equations with clamped boundary conditions are derived.
The results extend a theorem of P. Korman (2004) by allowing the presence of a singularity in the nonlinearity.
The paper also establishes an a priori estimate for $C^{3}$-norm of positive solutions, which is optimal in H\"{o}lder regularity.
Applications to MEMS/NEMS models are presented.
\end{abstract}

\maketitle
\numberwithin{equation}{section}
\newtheorem{theorem}{Theorem}[section]
\newtheorem{remark}[theorem]{Remark}
\newtheorem{lemma}[theorem]{Lemma}
\newtheorem{definition}[theorem]{Definition}

\section{Introduction}\label{sec:1}
Consider the following fourth-order equation with doubly-clamped boundary conditions
\begin{equation}\label{eq:4order}
\left\{\begin{array}{l}
u^{\prime \prime \prime \prime}(x)=\lambda f(u(x)), \quad x \in(0,1), \\
u(0)=u(1)=u^{\prime}(0)=u^{\prime}(1)=0,
\end{array}\right.
\end{equation}
where $\lambda>0$ is a parameter and $f$ is a non-negative continuous function on a interval $[0,r)$ with some $r\leq \infty$.  Problem \eqref{eq:4order} arises in many physical models
describing the deformation of elastic objects clamped at the endpoints. The nonlinearity $f$ represents a nonlinear external force.

In this paper, we are concerned with the structure of the solution set of positive solutions of \eqref{eq:4order}.
By a \emph{positive solution} we mean that a function $u\in C^4[0,1]$ satisfies \eqref{eq:4order} and $0\not\equiv u\geq 0$ on $(0,1)$.
Since we are only interested in positive solutions of \eqref{eq:4order}, we may assume, if necessary, that $f$ has been extended to a non-negative continuous function on $(-\infty,r)$.
Given $f$ is non-negative, by \cite[Theorem 1]{chow1973maximum}, any non-trivial solution of \eqref{eq:4order} is positive for $\lambda>0$ and no positive solutions occur for $\lambda<0$.
For each $\lambda>0$, we denote the corresponding solutions of \eqref{eq:4order} of \eqref{eq:4order} by $u_{\lambda}$ or $u$ in short.
Denote the solution set or the solution curve by
\begin{equation*}
\mathcal{S}=\left\{(\lambda,u)\mid\lambda>0 \text{ and } u \text{ is a positive solution of } \eqref{eq:4order}_\lambda\right\}.
\end{equation*}
We focus on the case where the nonlinearity $f$ has a singularity. Such problems are practically significant, especially in models of  Micro/Nano-Electro-Mechanical Systems (MEMS/NEMS). For instance, the typical example $f(u)=\frac{1}{(1-u)^2}$, which follows the inverse square law,  models the Coulomb force in the parallel plate capacitors. The recent monograph \cite{Koochi2020} presents various 1-D beam-type MEMS/NEMS models that conform to the equation in \eqref{eq:4order}. Notably, the singularity of $f$ among them is a critical characteristic, as demonstrated in examples following Theorem \ref{th2:u bounded} below. Those models have stimulated our research interest.

In the past two decades, fourth-order MEMS/NEMS models have attracted significant attention, and various numerical and theoretical results have been established (\cite{Esposito2010,Koochi2020,Laurencot2017,Lin2007,pelesko2002modeling,Cassani2009,Laurencot2014,Liu2022,Lindsay2014,Cowan2010,Guo2014,Guo2008,Guo2009}).
However, to the best of the authors' knowledge,  the existing results are still some distance away from characterizing the complete structure of the solution set
of problem \eqref{eq:4order}  when  $f$ exhibits a singularity at a certain point $0<r<\infty$.
Significant progress in this direction was made in [12] regarding the following problem
\begin{equation}\label{eq:biharmonic}
\Delta^{2} u-T \Delta u=\frac{\lambda}{(1-u)^{2}} \quad \text { in }  B_1, \quad u=\partial_n u=0 \quad \text {on }  \partial B_1,
\end{equation}
where  $T \geq 0$ and $B_1 \subset {\mathbb R}^{d}\ (d=1,2)$ is the unit ball.
A continuous global solution curve was established in \cite[Theorem 1.1]{Laurencot2014} by the real analytic bifurcation theory (\cite{Buffoni2000}). Furthermore, the endpoint of the curve was derived. However, the complete structure of the solution set as well as the exact multiplicity of solutions remains to be explored. In the present paper, we will focus on the 1-D problem \eqref{eq:4order} but with more general $f(u)$, which includes a variety of examples coming from MEMS/NEMS models in \cite{Koochi2020} (see Examples after Theorem \ref{th2:u bounded} below).

This paper can be considered a sequel of M.~Liu and the second author \cite{Liu2022}, where they obtained the global solution curve for problem \eqref{eq:4order} but with doubly-pinned boundary conditions
$$u(0)= u(1) = u^{\prime\prime}(0)  = u^{\prime\prime}(1)=0. $$
 In contrast, the present problem is more challenging for two main reasons: the maximum principle is not directly applicable and the concavity of positive solutions changes over the interval $(0,1)$.

B.~P. Rynne \cite{Rynne04} studied a $2m$-th Dirichlet boundary value problem, which includes \eqref{eq:4order} with $r=\infty$,
and under monotonicity and convexity assumptions, showed that the problem has a smooth solution curve $\mathcal{S}_0$ emanating from $(\lambda, u)=(0, 0)$, and described the possible shapes and asymptotics of the curve. For problem \eqref{eq:4order} with convex increasing nonlinearity $f$ defined on $[0,\infty)$,
P.~Korman \cite[Theorem 1.1]{Korman2004} first
proved the complete structure of the solution set,
using a bifurcation approach. The result is as follows.

\begin{theorem}[\cite{Korman2004}]\label{thm:korman}
	Assume that $ f(u) \in C^{2}(0,\infty) \cap C^1[0,\infty)  $ satisfies $f(u)>0$ for $u \geq 0$, $f^{\prime}(0) \geqslant  0$, $f^{\prime \prime}(u) >0$ for $u>0$, and
	\begin{align}
	\label{eq:supper}
	& \lim _{u \rightarrow \infty} \frac{f(u)}{u}=\infty,
	\end{align}
	Then all positive solutions of \eqref{eq:4order} lie on a unique smooth curve of solutions. This curve starts at $(\lambda, u)=(0,0) $,  it continues for $\lambda> 0$ until a critical $\lambda_{0}$, where it bends back, and continues for decreasing $\lambda$ without any more turns, tending to infinity when $\lambda\downarrow 0$.
	In other words, we have exactly two, one or no solutions, depending on whether $0<\lambda<\lambda_{0}, \lambda=\lambda_{0}$, or $\lambda>\lambda_{0}$.
	Moreover, all solutions are symmetric with respect to the midpoint $x=\frac{1}{2}$, and the maximum value of the solution, $ u(\frac{1}{2}) $, is strictly monotone on the curve.
\end{theorem}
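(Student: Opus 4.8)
The plan is to run the global bifurcation and Crandall--Rabinowitz program, adapted to the fourth-order clamped operator, with the entire argument hinging on a single sign computation at degenerate points. I would organize the proof around four ingredients: symmetry of positive solutions, a local smooth curve issuing from the origin, a one-signed analysis of the linearized operator that pins down the direction of every turn, and an a priori estimate powered by the superlinearity \eqref{eq:supper}. Throughout I would exploit that the clamped beam operator $w \mapsto w''''$ has a strictly positive Green's function $G(x,s)$ on $(0,1)^2$, which replaces the maximum principle that is unavailable here.

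For symmetry, I would set $P(x) = u(x) - u(1-x)$ and use the invariance of \eqref{eq:4order} under $x \mapsto 1-x$: the mean value theorem gives $P'''' = \lambda c(x) P$ with $c(x) = f'(\xi(x)) \ge 0$ and the full clamped data $P(0)=P(1)=P'(0)=P'(1)=0$. Recasting this as $P = \lambda \int_0^1 G(x,s) c(s) P(s)\,ds$ exhibits a nontrivial $P$ as an eigenfunction at eigenvalue one of a positive compact operator; since $P$ is anti-symmetric, $P(x) = -P(1-x)$, it would be \emph{sign-changing}, and a comparison with the principal (positive) eigenfunction forces $P \equiv 0$. Symmetry yields $u'(1/2) = u'''(1/2) = 0$, so the maximum is attained at $x = \tfrac12$ and $\alpha := u(\tfrac12)$ becomes a natural parameter.

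The core is the linearized operator $L_u w := w'''' - \lambda f'(u) w$ with clamped boundary conditions, which is self-adjoint. Near $(\lambda,u)=(0,0)$ the operator $L_0 = (\cdot)''''$ is invertible and $f(0)>0$, so the implicit function theorem produces a smooth curve of positive solutions emanating from the origin, continuing as long as $L_u$ is invertible. At a degenerate parameter I claim $\ker L_u$ is one-dimensional and spanned by a function $w$ of one sign: writing $w = \lambda \int_0^1 G(x,s) f'(u(s)) w(s)\,ds$ exhibits $w$ as an eigenfunction of a positive compact operator, and by Krein--Rutman the value $1$ is first reached at the principal eigenvalue, whose eigenfunction is positive. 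Since $F_\lambda = -f(u)$ satisfies $\int_0^1 f(u)\,w\,dx \ne 0$, the Crandall--Rabinowitz theorem identifies the point as a nondegenerate turning point. Differentiating the equation twice along the curve and applying the Fredholm alternative at the turn ($\dot\lambda=0$, $\dot u = w$) gives the bending constant
\begin{equation*}
\lambda_{ss} = -\,\frac{\lambda \int_0^1 f''(u)\, w^3\, dx}{\int_0^1 f(u)\, w\, dx},
\end{equation*}
and because $f'' > 0$, $f > 0$, $w > 0$, $\lambda > 0$, we get $\lambda_{ss} < 0$. Every turn is therefore a strict local maximum of $\lambda$: the curve can only bend to the left, which forbids two consecutive turns and delivers the uniqueness of $\lambda_0$.

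To close the global picture I would test \eqref{eq:4order} against the positive principal eigenfunction $\phi_1$ of the clamped beam ($\phi_1'''' = \mu_1 \phi_1$) and integrate by parts, obtaining $\mu_1 \int_0^1 u\phi_1\,dx = \lambda \int_0^1 f(u)\phi_1\,dx$; convexity with $f(0)>0$ makes $t/f(t)$ bounded, so $\lambda \le \mu_1 \sup_{t>0} t/f(t) < \infty$, giving $\lambda_0 < \infty$ and no solutions for $\lambda > \lambda_0$, while \eqref{eq:supper} forces $\lambda \downarrow 0$ as $\alpha \to \infty$. For monotonicity, $L_u u_\lambda = f(u)$ shows $u_\lambda$ is one-signed on each smooth branch (positive before the turn, negative after), and at the turn $\dot u = w$ with $w(\tfrac12) > 0$; thus $u(\tfrac12)$ strictly increases along the whole curve, so $\alpha$ is a global strictly monotone parameter, every positive solution lies on this single curve, and the multiplicity count follows. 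I expect the \emph{sign-definiteness of the kernel function} $w$, equivalently that the first degeneracy occurs at the principal eigenvalue, to be the main obstacle: unlike the second-order case it is not handed over by a maximum principle but must be extracted from the positivity of $G$ together with a Krein--Rutman argument, and keeping $w$ strictly positive (and $w(\tfrac12)>0$) is exactly what makes both the one-directional turning and the global monotonicity go through.
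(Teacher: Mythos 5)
Your outline reproduces the right skeleton --- note that this is a cited theorem of Korman, and the paper itself does not reprove it but only records the facts (A)--(D) it needs --- and your bending computation $\lambda_{ss}=-\lambda\int_0^1 f''(u)w^3\,dx/\int_0^1 f(u)w\,dx<0$ is correct. But there is a genuine gap exactly at the point you yourself flag as the main obstacle. Your Krein--Rutman argument shows only that at the \emph{first} degeneracy along the curve the value $1$ coincides with the spectral radius of the compact operator $w\mapsto\lambda\int_0^1G(x,s)f'(u(s))w(s)\,ds$, so that the kernel there is spanned by a one-signed function. It says nothing about later degeneracies: after the turn the curve lives in the region where the spectral radius exceeds $1$, and if a \emph{second} eigenvalue crosses $1$ there, the kernel function would be sign-changing; then $\int_0^1 f(u)w\,dx$ could vanish (so Crandall--Rabinowitz need not even apply) and the sign of $\int_0^1 f''(u)w^3\,dx$ is lost, so you cannot exclude further turns. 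Korman closes this by a direct ODE argument (his Theorem 2.13, recorded as fact (A) in Section 2): \emph{any} nontrivial solution of the linearized clamped problem is of one sign, at every degenerate point, proved from the structural information on $u$ (the convexity and inflection-point facts in (B)), not from positivity of the Green's function.

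The same over-reliance on positive-operator theory weakens two other steps. For symmetry, the anti-symmetric difference $P$ solves $P=\lambda K_cP$ with $P$ sign-changing; Krein--Rutman only guarantees that the \emph{spectral radius} has a positive eigenfunction, it does not forbid $1$ from being a lower eigenvalue with a sign-changing eigenfunction, so ``comparison with the principal eigenfunction'' does not force $P\equiv0$. Korman instead shows $u'>0$ up to the maximum point and that $u'''$ vanishes there, and then reflects (his Lemma 2.9, fact (C)). For the monotonicity of $u(\tfrac12)$, your claim that $L_uu_\lambda=f(u)>0$ makes $u_\lambda$ one-signed fails on the upper branch, where $(I-T)^{-1}$ is no longer positivity-preserving; the argument the paper relies on is the global parameterization of all solutions by $u(\tfrac12)$ (Korman's Lemma 2.10, fact (D)), which yields strict monotonicity along the whole curve without any sign information on $u_\lambda$, and also yields that no solutions lie off the curve. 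These are precisely the places where the fourth-order problem resists the second-order toolkit and requires the direct structural lemmas rather than Green's-function positivity.
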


The bifurcation diagram is depicted in Figure \ref{fig:fig1}(i).
The typical examples of $f$ satisfying the assumptions of Theorem \ref{thm:korman} are the exponential nonlinearity $f(u)=e^u$ and the power nonlinearity $f(u)=(1+u)^p\ (p>1)$. However, if $f$ is singular at some $r\in(0,\infty)$, for example, $f(u)=\frac{1}{(r-u)^{p}} \ (p>0)$,  then Theorem \ref{thm:korman} is no longer applicable.

Our goal in this paper is to establish a global bifurcation result, analogous to Theorem \ref{thm:korman}, for the case where $f$ has a singularity. To this end, we need to replace the superlinear condition \eqref{eq:supper} at infinity with a growth condition near singularity; see \eqref{eq:supper2} below.
Our main result is as follows.

	\begin{theorem}\label{th2:u bounded}
	Assume that $ f \in C^{2}[0, r)$ with some $r\in (0,\infty)$ satisfies
	\begin{align}\label{eq:f positive}
		& f(u)>0 \quad \text{for  } 0 \leq u<r,
		\\\label{eq:f increasing}
		& f^{\prime}(u) >  0 \quad \text{for } 0<u<r,
		\\\label{eq:convex}
		& f^{\prime \prime}(u) >0  \quad \text{for } 0<u<r,
		\\\label{eq:supper2}
		& 0<\liminf_{u \rightarrow r^{-}} \,(r-u)f(u) \leq \infty.
	\end{align}
	Then all conclusions of Theorem \ref{thm:korman} still hold, except that the solution curve finally tends to
a bounded function $w$ instead of infinity when $\lambda  \downarrow  0$.
Furthermore, the function $w$ belongs to $( {C}^{2+\alpha}[0,1] \setminus {C}^{3}[0,1])\cap C^{4}( [0,1]\setminus  \{  \frac{1}{2}\}  )$ for any $\alpha\in(0,1)$ and is an explicitly given axisymmetric function with respect to $x=\frac{1}{2}$, with the maximum value $w( \frac{1}{2})  = r$.
\end{theorem}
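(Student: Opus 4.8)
The plan is to mirror the bifurcation argument behind Theorem~\ref{thm:korman} while carefully managing the new feature, namely the singularity at $u=r$ which caps the maximum value of solutions at $r$ rather than allowing blow-up. First I would set up the continuation/bifurcation machinery: local bifurcation from $(\lambda,u)=(0,0)$, symmetry of positive solutions about $x=\tfrac12$ (so that $u'(\tfrac12)=0$ and $u$ is axisymmetric), and the fact that the midpoint value $u(\tfrac12)$ is a strictly monotone parameter along the curve. The convexity hypothesis \eqref{eq:convex} together with positivity \eqref{eq:f positive} and monotonicity \eqref{eq:f increasing} should allow me to show the linearized operator is nondegenerate except at turning points, so that $\mathcal{S}$ is locally a smooth curve; a Crandall--Rabinowitz turning-point analysis then gives that any degeneracy is a simple quadratic turn. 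The essential structural claim — exactly one turn, after which the curve continues monotonically for decreasing $\lambda$ — should follow as in Korman's proof from a sign condition on the second derivative of $\lambda$ along the curve at a degenerate point, which the strict convexity of $f$ forces to have a fixed sign.

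The genuinely new part, and the main obstacle, is controlling the behaviour as $\lambda\downarrow 0$: I must show the curve does not escape to infinity but instead limits onto a specific bounded function $w$ with $w(\tfrac12)=r$. The plan here is to exploit the growth condition \eqref{eq:supper2}, which says $(r-u)f(u)$ stays bounded away from $0$ near the singularity. As $u(\tfrac12)\uparrow r$ the nonlinearity $f(u)$ blows up, and integrating the equation $u''''=\lambda f(u)$ against suitable test functions (or using the a priori $C^3$ estimate advertised in the abstract) should show that $\lambda$ must tend to $0$ precisely as the maximum approaches $r$; conversely $\lambda$ cannot approach any positive value, because a positive-$\lambda$ limit with $u(\tfrac12)=r$ would force $\int f(u)$ to diverge against a finite biharmonic bound. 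This is where \eqref{eq:supper2} does the real work: it calibrates the rate of blow-up of $f$ against the rate at which $u$ approaches $r$ near the midpoint, pinning the limiting profile.

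To identify $w$ explicitly I would pass to the limit in the integral (Green's function) formulation of \eqref{eq:4order}. Writing $u_\lambda(x)=\lambda\int_0^1 G(x,y)f(u_\lambda(y))\,dy$ with $G$ the clamped-beam Green's function, the limit curve has $\lambda\downarrow 0$ but $\lambda f(u_\lambda)$ converging to a nontrivial measure/limit concentrated by the singularity at $x=\tfrac12$; the explicit form of $w$ should emerge from solving the limiting biharmonic problem $w''''=0$ on each half-interval with the clamped conditions at the endpoints and the matching/regularity conditions at $x=\tfrac12$ dictated by $w(\tfrac12)=r$ and $w'(\tfrac12)=0$. This yields $w$ as an explicit piecewise cubic polynomial, axisymmetric about $x=\tfrac12$. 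Finally I would verify the claimed regularity $w\in(C^{2+\alpha}[0,1]\setminus C^3[0,1])\cap C^4([0,1]\setminus\{\tfrac12\})$ by direct inspection of this explicit $w$: it is smooth away from the midpoint and $C^{2+\alpha}$ but not $C^3$ there, the loss of the third derivative being exactly the optimal regularity forced by the singular concentration — and this is precisely the optimality in H\"older regularity mentioned in the abstract. The hard part throughout is the $\lambda\downarrow 0$ limit analysis, since it requires uniform control near the singularity that the earlier convexity arguments do not supply; the a priori $C^3$ estimate will be the key tool there.
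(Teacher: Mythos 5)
Your overall strategy coincides with the paper's: run Korman's continuation argument (Implicit Function Theorem at regular points, Crandall--Rabinowitz at turning points, convexity forcing a single left turn, global parameterization by $u(\tfrac12)$), use an a priori $C^3$ bound to get compactness as $\lambda\downarrow 0$, and identify the limit $w$ by solving $w''''=0$ on each half-interval with the clamped conditions at the endpoints and $w(\tfrac12)=r$, $w'(\tfrac12)=0$ at the midpoint. The paper does exactly this, with the $C^3$ bound proved by a convexity argument for $u'''$ and the uniform bound $\|u\|_\infty\le c<r$ on compact $\lambda$-intervals proved via the energy identity together with the lower bound $f(u)\ge a/(r-u)$ extracted from \eqref{eq:supper2}.

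There is, however, one genuine gap in your endpoint analysis: you impose the matching conditions at the \emph{single} point $x=\tfrac12$ without justifying that the contact set $\{x: w(x)=r\}$ is a single point. From the $C^{2+\alpha}$ limit one only knows that $w$ is symmetric, nondecreasing on $(0,\tfrac12)$, and $\max w=r$; a priori $w$ could equal $r$ on a whole plateau $[a,1-a]$ with $a<\tfrac12$, in which case $w''''=0$ holds only on $[0,a)\cup(1-a,1]$ and your limiting boundary-value problem is not the right one (the "concentrated measure" would be spread over the plateau rather than a Dirac mass at $\tfrac12$). The paper rules this out (Lemma \ref{lem:endpoint}, following an idea of Lauren\c{c}ot--Walker): it first upgrades the regularity of $w$ to $C^3$ on $[0,\tfrac12)\cup(\tfrac12,1]$ by showing $\|u_n\|_{C^4}$ is bounded on compact subintervals away from $\tfrac12$, so that a plateau endpoint $a<\tfrac12$ would satisfy $w(a)-r=w'(a)=w''(a)=w'''(a)=0$; then multiplying $w''''=0$ by $w$ and integrating over $(0,a)$ gives $\int_0^a (w'')^2\,dx=0$, forcing $w$ to be affine and hence identically zero by the clamped conditions at $x=0$, contradicting $w(a)=r$. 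Without some version of this step your identification of $w$ as the explicit piecewise cubic is not complete. A second, smaller omission: you should also say why the upper branch cannot limit back to $(0,0)$ (the paper excludes this by local uniqueness at the origin from the Implicit Function Theorem), since the compactness argument alone allows both $w\equiv 0$ and $\max w=r$ as candidate limits.
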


    The bifurcation diagram is depicted in Figure \ref{fig:fig1}(ii). The explicit expression of the singular solution $w$ is present in Lemma 2.3 below.
	\begin{figure}
	\centering
	\includegraphics[width=0.80\linewidth]{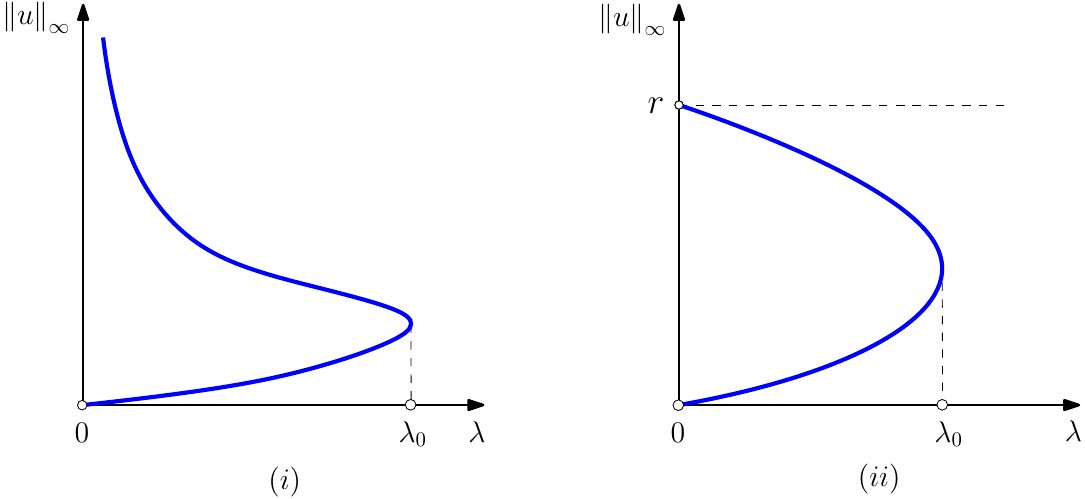}
	\caption{Global bifurcation diagrams provided by Theorems \ref{thm:korman} and \ref{th2:u bounded}. (i) $r=+\infty$ and $ \lim _{u \rightarrow +\infty} \frac{f(u)}{u}=+\infty$. (ii) $r<+\infty$ and $ \liminf_{u \rightarrow r^{-}} \,(r-u)f(u) >0 $.}
	\label{fig:fig1}
\end{figure}	

	The method adopted in our study is the bifurcation approach to fourth-order Dirichlet problems, originally formulated by P.~Korman \cite{Korman2004}.
	However, the singularity of $f$ poses new challenges.
  To overcome difficulties in applying the method, we establish the crucial a priori bound $\|u\|_\infty<c<r$
	for positive solutions of \eqref{eq:4order}. Consequently, the arguments in \cite{Korman2004} retain their validity for the present problem.
An additional challenge concerns the characterization of the endpoint of the solution curve. To this end,  we adopt an idea from
P.~Lauren\c{c}ot and C.~Walker \cite[Theorem 2.20]{Laurencot2014}, where the endpoint of the solution curve was well-studied for problem \eqref{eq:biharmonic}.
Combining the idea and our a priori estimate of $C^3$-norm, we derive an explicit bounded function as the endpoint of the global solution curve.
The main contributions of this paper are the derivation of a priori estimates,
the extension to a broader class of nonlinearities, and the application of these results to new models from  \cite{Koochi2020}.

\begin{example}
Theorem \ref{th2:u bounded} applies to many doubly-supported beam-type MEMS/NEMS models arising from the recent monograph \cite[Chapter 2]{Koochi2020}
when the boundary conditions are of clamped-clamped type (cf. \cite[(2.126)]{Koochi2020}).
Some typical governing equations are present as follows.
	\begin{enumerate}
		\item Carbon-nanotube actuator in NEMS (cf. \cite[(2.147)]{Koochi2020})
	$$
	u''''=\frac{\beta_{n}}{(1-u)^{n}}.
	$$
	\item  Size-dependent double-sided nanobridge with single nanowire (cf. \cite[ (2.202)]{Koochi2020})
	$$	u''''=\frac{\beta_{vdW}}{k(1+\delta) (1-u)^{4}}+\frac{\alpha}{(1+\delta)(1-u)\ln^2[2k(1-u)]}.\label{eq:vondelwal1}$$
	\item Size-dependent double-sided nanobridge with two nanowires (cf. \cite[(2.269)]{Koochi2020})
	$$u''''=\frac{\beta_{vdW}}{ 2(1+\delta)(1-2u)^{5/2}}+\frac{\alpha}{2(1+\delta)(1-2u)\ln^2[k(1-2u)]}.$$
	\item Size-dependent nanoactuator (cf. \cite[(2.103)]{Koochi2020})
	$$ u''''=\frac{\beta_{Cas}}{(1+\delta)(1-u)^{4}}+\frac{\alpha}{(1+\delta)(1-u)^{2}}+\frac{\alpha \gamma}{(1+\delta)(1-u)}.
	$$
	\end{enumerate}
Here, $\beta_{n}/\beta_{vdW}$, $\delta$, $\alpha$, and $\beta_{Cas}$ are the dimensionless parameters of the van der Waals force,  for incorporating the size effect, associated with the external voltage, of the Casimir force, respectively; $k$ indicates the gap to nanowire radius ratio; $\gamma$ is related to the
gap-to-width ratio associated with the fringing field effect.

As the analysis on fulfillment of conditions for these examples is the same as in \cite{Liu2022} for hinged-hinged boundary conditions, we omit the details and refer the readers to \cite[Section 2]{Liu2022}.
\end{example}	
	
The paper is organized as follows. In Section 2, we recall some preliminary results and prove several key lemmas. Section 3 presents the proof of the main theorem. Finally, Section 4 provides concluding remarks and suggests some open problems for future work.

\section{Lemmas}\label{sec:lemmas}
In this section, we prove some a priori estimates, which play crucial roles in the proof of Theorem  \ref{th2:u bounded}.
	
First, we review several facts about problem \eqref{eq:4order} that have been proven in P.~Korman \cite{Korman2004} for the case $r=+\infty$, but clearly hold after modifying the range from $r=\infty$ to $r<\infty$. Specifically, assuming $f(u) \in C^1[0, r)$ satisfies \eqref{eq:f positive} and \eqref{eq:f increasing} for some $r<\infty$, we derive the following facts:
\begin{enumerate}
	\item[({A})] (Linearization)
	According to \cite[Corollary 2.2]{Korman2004}, the linear space of the non-trivial solutions of the linearized problem
	\begin{equation}\label{eq:liner2}
	\left\{\begin{array}{l}
	w^{\prime \prime\prime \prime}(x)=\lambda f^{\prime}(u(x))w(x), \quad  x \in(0, 1),  \\
	w(0)=w^{\prime}(0)=w(1)=w^{\prime}(1)=0.
	\end{array}\right.
	\end{equation}
is either empty or one-dimensional. Furthermore, according to \cite[Theorem 2.13]{Korman2004},  $w$ cannot vanish inside $(0,1)$, i.e, the sign of any non-trivial solution of \eqref{eq:liner2} does not change.

	\item[({B})] (Convexity and inflection points)
	According to \cite[Lemma 2.3]{Korman2004}, any positive solution of \eqref{eq:4order} satisfies
	\begin{equation}\label{eq:u2>0}
		u^{\prime\prime}(0)>0\quad\text{and} \quad u^{\prime\prime}(1)>0.
	\end{equation}
	Furthermore, according to \cite[Lemma 2.7]{Korman2004},  $u(x)$ has exactly one local  maximum and exactly two inflection points.
	
	\item[({C})] (Symmetry)
According to \cite[Lemma 2.9]{Korman2004},   any positive solution of \eqref{eq:4order} is symmetric with respect to $x=\frac{1}{2}$. Moreover, $u^{\prime}(x)>0$ on $(0, \frac{1}{2})$.
	
	\item[({D})] (Global parameterization)
According to \cite[Lemma 2.10]{Korman2004},  all positive solutions of \eqref{eq:4order} are globally parameterized by their maximum values $ u(\frac{1}{2})$. Precisely, for each $p> 0$, there is at most one $\lambda> 0$ and at most one solution $u(x)$  of \eqref{eq:4order} such that $u(\frac{1}{2})=p$.
\end{enumerate}

	Next, we establish a priori estimates, which play a key role in the proof of the main theorem.

\begin{lemma}\label{lem:u bounded}
	Assume that $f(u) \in C^1[0, r)$ satisfies
	\eqref{eq:f positive},\eqref{eq:f increasing} and \eqref{eq:supper2} for some $r<\infty$. If $I \subset [0,\infty)$ is a bound interval, then there exists a positive constant $C$  such that any positive solution $u_{\lambda}$ of \eqref{eq:4order} with $\lambda\in I$ satisfies
\begin{equation}\label{eq:u bound0}
			\left \|u_\lambda(x)\right \|_{C^{3}[0,1]} \le C.
	\end{equation}
If further $I \subset (0,\infty)$ is a compact interval, then there exist two positive constants $c$ and $C_1$  such that any positive solution $u_{\lambda}$ of \eqref{eq:4order} with $\lambda\in I$ satisfies
	\begin{equation}\label{eq:u bound}
	\left \|u_\lambda(x)\right \|_{C[0,1]} \leq c <r\quad\text{and}\quad
		\left \|u_\lambda(x)\right \|_{C^{4}[0,1]} \le C_1.
	\end{equation}
\end{lemma}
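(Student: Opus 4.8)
My plan is to reduce the whole lemma to a single uniform bound on the total load
\[
\Phi(u):=\lambda\int_0^1 f(u)\,dx=-2u'''(0),
\]
and to read off every derivative estimate from it. By the symmetry (C) and the exact integration of the equation, $u'''(x)=-\lambda\int_x^{1/2}f(u)\,ds$ on $[0,\tfrac12]$, so $\|u'''\|_\infty=|u'''(0)|=\tfrac12\Phi(u)$. Since $\int_0^{1/2}u''\,ds=u'(\tfrac12)-u'(0)=0$ and $u''(0)>0$ by (B), the function $u''$ has a zero $x_0\in(0,\tfrac12)$, whence $|u''(x)|=|\int_{x_0}^x u'''|\le\tfrac12\|u'''\|_\infty=\tfrac14\Phi(u)$ for $x\in[0,\tfrac12]$ (and on $[\tfrac12,1]$ by symmetry); integrating once more with $u(0)=u'(0)=0$ then bounds $\|u'\|_\infty$ and $\|u\|_\infty$. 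Thus a bound $\Phi(u)\le C$ is exactly the $C^{3}$-estimate \eqref{eq:u bound0}.

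The key step is therefore to bound $\Phi(u)$ uniformly, and this is where I expect the real work to lie: testing against the principal eigenfunction, or bounding $u(\tfrac14)$ directly through the Green's function, is circular, since those controls reappear on the right-hand side. Instead I would use two facts. First, positivity of the Green's function $G$ of the clamped operator on $(0,1)$ (see \cite{chow1973maximum}): with $g_0:=\min_{y\in[1/4,3/4]}G(\tfrac12,y)>0$,
\[
r>u(\tfrac12)=\lambda\int_0^1 G(\tfrac12,y)f(u)\,dy\ge \lambda g_0\int_{1/4}^{3/4}f(u)\,dy,
\]
so $\lambda\int_{1/4}^{3/4}f(u)\le r/g_0=:C_2$. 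Second, since $u$ is nondecreasing on $[0,\tfrac12]$ by (C) and $f$ is increasing, the map $x\mapsto f(u(x))$ is nondecreasing on $[0,\tfrac12]$, so comparing its averages over the adjacent quarter-intervals gives $\int_0^{1/4}f(u)\le\int_{1/4}^{1/2}f(u)$. Combining this with the symmetry $\int_{1/4}^{3/4}=2\int_{1/4}^{1/2}$,
\[
\Phi(u)=2\lambda\int_0^{1/2}f(u)\le 4\lambda\int_{1/4}^{1/2}f(u)=2\lambda\int_{1/4}^{3/4}f(u)\le 2C_2 .
\]
This bound depends only on $r$ and $g_0$, not on $\lambda$; it proves \eqref{eq:u bound0} for every bounded $\lambda$-interval (in fact uniformly in $\lambda$) and settles the first assertion.

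For the compact case $I\subset(0,\infty)$ I would add the energy identity obtained by multiplying the equation by $u'$ and integrating over $(0,\tfrac12)$,
\[
\big(u''(0)\big)^2-\big(u''(\tfrac12)\big)^2=2\lambda F(p),\qquad F(p):=\int_0^p f(s)\,ds,\ \ p:=u(\tfrac12),
\]
together with the growth hypothesis \eqref{eq:supper2}, which forces $F(p)\to\infty$ as $p\uparrow r$. From the previous paragraph $u''(0)\le\tfrac14\Phi(u)\le\tfrac12 C_2$, hence $\lambda F(p)\le\tfrac12(u''(0))^2\le C$; for $\lambda\ge\lambda_{\min}:=\min I>0$ this bounds $F(p)$ and therefore pins $p=\|u\|_\infty\le c<r$ with $c$ independent of $u$. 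Once $\|u\|_\infty\le c<r$, one has $f(u)\le\max_{[0,c]}f<\infty$, so $\|u''''\|_\infty=\lambda\|f(u)\|_\infty\le C_1$, and integrating against the clamped conditions upgrades this to the full $C^{4}$-bound \eqref{eq:u bound}.

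The conceptual heart, and the step I expect to be most delicate to get cleanly, is the uniform control of $\Phi(u)$ down to $\lambda=0$: the monotone-rearrangement comparison $\int_0^{1/4}f(u)\le\int_{1/4}^{1/2}f(u)$ is what breaks the apparent circularity, exploiting that the load concentrates near $x=\tfrac12$, exactly the region controlled by the Green's-function test. I would stress that the dichotomy between \eqref{eq:u bound0} and \eqref{eq:u bound} is genuine and not an artefact of the method: along solutions with $\lambda\downarrow0$ and $p\uparrow r$ one has $\|u''''\|_\infty=\lambda\|f(u)\|_\infty\to\infty$, so no $C^{4}$-bound can survive on an interval reaching $\lambda=0$, whereas $\|u'''\|_\infty=\tfrac12\Phi(u)$ stays bounded and $u'''$ merely develops a corner at $x=\tfrac12$ in the limit. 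This is precisely the singular endpoint $w\in C^{2+\alpha}\setminus C^{3}$ and accounts for the optimality of the H\"older regularity announced in the abstract.
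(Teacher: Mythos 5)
Your proof is correct, and for the central $C^{3}$ estimate it takes a genuinely different route from the paper. The paper never introduces the total load $\Phi(u)=\lambda\int_0^1 f(u)\,dx$; instead it first shows by contradiction that $u''(x)$ is bounded at each fixed interior point (a large $u''(\tfrac14)$ would force $u$ above $r$ after two integrations), then uses the convexity of $u'''$ on $(0,\tfrac12)$ to push unboundedness of $u'''(0)$ into the interior, where it contradicts the boundedness of $u''(\gamma)-u''(\beta)$. That argument is elementary and self-contained but non-quantitative. Your argument --- testing at $x=\tfrac12$ against the positive clamped-beam Green's function to bound $\lambda\int_{1/4}^{3/4}f(u)$ by $r/g_0$, and then the monotone comparison $\int_0^{1/4}f(u)\le\int_{1/4}^{1/2}f(u)$ (valid since $f\circ u$ is nondecreasing on $[0,\tfrac12]$ by (C) and \eqref{eq:f increasing}) to recapture the full integral --- yields an explicit constant $\|u'''\|_\infty\le r/g_0$ that is uniform over \emph{all} $\lambda>0$, not merely over bounded intervals, and it cleanly identifies $\|u'''\|_\infty=\tfrac12\Phi(u)$, which also explains the $C^{2+\alpha}\setminus C^3$ optimality discussed in the Remark. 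The price is the positivity of $G(\tfrac12,\cdot)$ on $[\tfrac14,\tfrac34]$, which you should justify (it follows from the maximum principle of Chow--Dunninger--Lasota cited in the paper, or from the explicit 1-D clamped Green's function), and you should state explicitly that $u<r$ on $[0,1]$ (the paper proves this first from \eqref{eq:supper2} and $u\in C^4$). For the second assertion your energy identity over $(0,\tfrac12)$, combined with $F(p)\to\infty$ as $p\uparrow r$ and the lower bound $\lambda\ge\min I>0$, is essentially the paper's Step 3 (the paper uses the pointwise identity and the explicit bound $f(u)\ge a/(r-u)$ to get $c=r(1-e^{-M/(a\lambda_*)})$, but the content is the same), and the passage to the $C^4$ bound is identical. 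No gaps.
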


\begin{remark}
For problem \eqref{eq:biharmonic}, an a priori bound for $C^{\frac{3}{2}}$-norm of solutions has been established in \cite[Lemma 2.11]{Laurencot2014}. In terms of H\"{o}lder regularity, the a priori bound \eqref{eq:u bound0} is optimal in the sense that for any $\alpha\in (0,1)$, there exist a sequence of $\lambda \rightarrow 0$ and a function $w\in C^{2+\alpha}[0,1]\setminus C^{3}[0,1]$ such that $u_\lambda\rightarrow w $ in $ C^{2+\alpha}[0,1]$; see Lemma \ref{lem:endpoint} below for details.
\end{remark}

\begin{proof}[\textbf{Proof}]
	For each given $\lambda>0$, denote by $u_\lambda$ positive solutions of  \eqref{eq:4order} if exist.
We claim that
	\begin{equation}\label{eq:solutionbounded}
		u_\lambda(x)<r \quad \text{for all } x \in (0,1).
	\end{equation}
	Otherwise, there exists $x_{0} \in(0,1)$ such that $ u_{\lambda}(x_{0})=r$ (Take $x_0$ to be the smallest such number). Then, it follows from  \eqref{eq:supper2} that $ \lim_{x \to x_{0}^-} f(u_\lambda(x)) = \infty$. This contradicts the fact that $u_\lambda\in C^{4}(0,1)$ satisfies the equation in \eqref{eq:4order}.
	
	From now on, we omit the subscript of $u_\lambda$ for simplicity.
			As mentioned in ({C}) above,
since \eqref{eq:f positive} and \eqref{eq:f increasing} hold,
	any positive solution $u(x)$ of \eqref{eq:4order} is symmetric with respect to $x =\frac{1}{2}$ and $u^{\prime}(x)>0\text{ on } (0,\frac{1}{2})$. Then $u(x)$ takes its maximum at $\frac{1}{2}$ and $u^{\prime}(\frac{1}{2})= 0=u^{\prime\prime\prime}(\frac{1}{2})$. It follows from \eqref{eq:f positive} that $u^{\prime\prime\prime}$ is increasing in $[0,1]$ and $u^{\prime\prime\prime}(x)< 0$ on  $[0,\frac{1}{2})$.
	In what follows,  it suffices to consider the problem on the interval $[0,\frac{1}{2}]$.
	
	 We next divide the proof into three steps.
	
	\textbf{Step 1.}	
	We claim that for any given $x\in (0,\frac{1}{2})$,
		$u^{\prime\prime}(x) $ is bounded for all $ \lambda \in I$.
	
	 Without loss of generality,  let $x=\frac{1}{4}$. We next prove that
	$u^{\prime\prime}(\frac{1}{4}) $ is uniformly bounded for $ \lambda \in I$.
		Assume on the contrary, that there exist a sequence of unbounded numbers $u^{\prime\prime}(\frac{1}{4})$ along some sequence of $\lambda_l\in I$.

	On the one hand, if $u^{\prime\prime}(\frac{1}{4}) $ is unbounded from above, we may assume that $u^{\prime\prime}(\frac{1}{4}) $ is positive.
	Since $ u^{\prime\prime\prime}(x) < 0$  on $(0,\frac{1}{2})$,
		we have
$$ u^{\prime}(x)
=\int_{0}^{x}u^{\prime\prime}(t)\,\mathrm{d}t+u^{\prime}(0)
=\int_{0}^{x}u^{\prime\prime}(t)\,\mathrm{d}t
>u^{\prime\prime}(\frac{1}{4})x\quad \text{for }  x\in (0,\frac{1}{4}).$$
Furthermore,
since $u^{\prime}(x)> 0$ on $(0,\frac{1}{2})$, it follows that
$$ u(x)
=\int_{0}^{x}u^{\prime}(t)\,\mathrm{d}t+u(0)
=\int_{0}^{x}u^{\prime}(t)\,\mathrm{d}t
>\frac{1}{2}u^{\prime\prime}(\frac{1}{4})x^2\quad \text{for }   x \in(0,\frac{1}{4}), $$
which implies that $u(x)$ is positive and unbounded on $(\frac{1}{8},\frac{1}{4})$, contradicting the boundedness \eqref{eq:solutionbounded}.
 So $ u^{\prime\prime}(\frac{1}{4}) $ is bounded from above.

On the other hand, if $u^{\prime\prime}(\frac{1}{4}) $ is unbounded from below, we may assume that $u^{\prime\prime}(\frac{1}{4}) $ is negative.
By the same way as above, we obtain that $u(x)$ is positive and unbounded on $ (\frac{3}{8},\frac{1}{2}) $, contradicting the boundedness  \eqref{eq:solutionbounded}.
So $ u^{\prime\prime}(\frac{1}{4}) $ is also bounded from below. The claim is true.

\textbf{Step 2.} We prove the a priori bound \eqref{eq:u bound0}.

We claim that   $u^{\prime \prime \prime}(0)$  is bounded for all $\lambda\in I$.
Since $ u^{\prime \prime\prime}(0) <0$, it suffices to prove that $u^{\prime \prime\prime}(0)$ is bounded from below. Assume, on the contrary, that
there exist a sequence of unbounded numbers $u^{\prime\prime\prime}(0)$ along some sequence of $\lambda_l\in I$.
Since
$$ (u^{\prime\prime \prime})''(x)=(u'''')'(x)=\lambda f^{\prime}(u(x))u^{\prime}(x)\quad \text{ on } (0,\frac{1}{2}),$$
it follows from \eqref{eq:f increasing} and ({C}) that $u^{\prime\prime \prime}(x)$ is convex on $(0,\frac{1}{2})$
and hence
$$u^{\prime \prime \prime}\Big(0+(1-\theta)\frac{1}{2}\Big) \leq \theta u^{\prime \prime \prime}(0)+(1-\theta) u^{\prime \prime \prime}\Big(\frac{1}{2}\Big)=\theta u^{\prime \prime \prime}(0)<0 \quad \text{for any } \theta \in(0, 1). $$
That is,
for any given $ \gamma \in(0, \frac{1}{2})$,  $u^{\prime \prime\prime}(\gamma)$ is negative and unbounded from below.
Since $u^{\prime \prime\prime}(x)$ is increasing on $(0,\frac{1}{2})$, it follows that $u^{\prime\prime\prime}(x)$ is unbounded from below on any proper subinterval $[\beta,\gamma]\subset\left(0, \frac{1}{2}\right)$.
Therefore,
\begin{equation}\label{eq:Integral}
\int_{\beta}^{\gamma} u^{\prime \prime \prime}(t) \,\mathrm{d}t<(\gamma-\beta)u^{\prime \prime\prime}(\gamma)<0 \;\text{ is unbounded from below}.
\end{equation}
But, the boundedness of $ u^{\prime\prime}(\beta) $ and $u^{\prime\prime}(\gamma)$ for any $\lambda \in I$, as shown in Step 1, implies
that
\begin{equation}\label{eq:boundintegral}
 \int_{\beta}^{\gamma} u^{\prime \prime \prime}(t) \,\mathrm{d}t=u^{\prime\prime}(\gamma)-u^{\prime\prime}(\beta) \;\text{ is bounded},
\end{equation}
contradicting \eqref{eq:Integral}. So the claim is true.

Due to the symmetry of $u$, $u^{\prime \prime \prime}(1)$  is also bounded for all $\lambda\in I$. Then the monotonicity of $u^{\prime\prime\prime}$ on $[0,1]$ yields
the boundedness of $\|u^{\prime\prime\prime}\|_{C^0}$ for all $\lambda \in I$.
Together with the bound \eqref{eq:solutionbounded} and the boundary conditions, it follows that the bound \eqref{eq:u bound0} holds.
	
	\textbf{Step 3.}	We prove the a priori bounds in \eqref{eq:u bound} provided that $I \subset (0,\infty)$ is compact.
	
	Since $f$ satisfies conditions \eqref{eq:f positive} and \eqref{eq:supper2}, it follows that there exists a constant $a>0$, such that
	\begin{equation}\label{eq:f>a}
	f(u)\geq \frac{a}{r-u} \qquad \text{for all } u \in (0, r).
	\end{equation}
	Indeed, set $a_1:=\liminf_{u \rightarrow r^{-}} \,(r-u)f(u)$. Then \eqref{eq:supper2} implies that $0<a_1\leq\infty$.
	If $a_1<\infty$, then for any given $\varepsilon\in (0, a_1)$, there exists a number $\delta\in(0,r)$ such that $(r-u)f(u)>a_1-\varepsilon$ on
	$(r-\delta,r)$.
	Since $(r-u)f(u)$ is continuous in $[0,r-\delta]$, we define $a_2:=\min_{[0,r-\delta]}(r-u)f(u)$ and
	$a:=\min\{a_1-\varepsilon,a_2\}$. Then \eqref{eq:f positive} implies that $a_2>0$ and hence $a>0$.
   The case of $a_1=\infty$ is similar by replacing $a_1-\varepsilon$ with some $M>0$.
So \eqref{eq:f>a} holds.
	
	Multiplying the equation in \eqref{eq:4order} by $u^{\prime} $ and integrating over $(0,x)$ by parts,
	we derive the energy identity
	$${u}^{\prime} {u}^{\prime \prime \prime}-\frac{1}{2} {u}^{\prime \prime 2}-\lambda {F}({u})=-\frac{1}{2} {u}^{\prime \prime}(0)^2\qquad \text{ for all }x \in [0,1],$$
	where $ {F}({u})= \int_{0}^{u} f(t)  \,\mathrm{d}t. $
	
	By Step 3, we have the a priori bound  $\|u_\lambda(x) \|_{C^{3}[0,1]} \le C$.
Combining the energy identity, we get that
	$$
	\lambda F(u)=u^{\prime} u^{\prime \prime \prime}-\frac{1}{2} u^{\prime \prime 2}+\frac{1}{2} u^{\prime \prime}(0)^2 \leq M.
	$$
	where $M$ is a positive constant.
	Furthermore, it follows from \eqref{eq:f>a} that
		$$
	M \geq \lambda F(u)=\lambda \int_0^u f(t) \,\mathrm{d}t \geq \lambda \int_0^u \frac{a}{r-t} \,\mathrm{d}t=- \lambda a \ln (r-t)|_0 ^u ,
	$$
	which implies that
	$$ u \leq r(1-e^{-\frac{M}{a \lambda_*}}).$$
	Here, $\lambda_*$ is the positive lower bound of the compact interval $I$.
	Letting ${c}=r(1-e^{-\frac{M}{a \lambda_*}})$,
	we obtain that $u(x)\le c<r$ on $(0,1)$ uniformly for $\lambda\in I$.
	
	Now, since $f(u)$ is a continuous function on the interval $[0,c]$ and $u''''=\lambda f(u)$, it follows that $u''''$ is uniformly bounded on $ [0,1]$ for all $\lambda\in I$.
	Combining the boundedness of $u$ and $u''''$ with the boundary conditions, we obtain that $\left \| u \right \| _{C^{4}} $ is bounded.
\end{proof}

We next give an upper bound of $\lambda$ for the existence of positive solutions of \eqref{eq:4order} with a singular nonlinearity.

\begin{lemma}\label{lem:l2}
	Assume that  $f(u)\in C[0,r) $  satisfies conditions \eqref{eq:f positive} and \eqref{eq:supper2}  for some $r<\infty$. Then there exists $\lambda_0 >0$ such that  problem \eqref{eq:4order} has no positive solution for $\lambda>\lambda_0$.
\end{lemma}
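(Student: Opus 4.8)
The plan is to run the standard first-eigenfunction (test-function) argument for non-existence at large $\lambda$. The underlying mechanism is that under \eqref{eq:f positive} and \eqref{eq:supper2} the nonlinearity is bounded below by a positive constant, so a large $\lambda$ forces the right-hand side of \eqref{eq:4order} to be large, while the clamped operator can only sustain a positive solution whose size grows like $\lambda$; since every positive solution stays strictly below the singular level $r$, this caps $\lambda$.

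First I would record a uniform positive lower bound for $f$. The estimate \eqref{eq:f>a} derived in the proof of Lemma \ref{lem:u bounded} uses only \eqref{eq:f positive} and \eqref{eq:supper2}, hence is available here and yields a constant $a>0$ with $f(u)\ge a/(r-u)$ for $u\in(0,r)$. Because $0\le u<r$ gives $0<r-u\le r$, this implies
$$f(u)\ge \frac{a}{r}=:m>0\qquad\text{for all } u\in[0,r).$$
I would also recall that every positive solution obeys $u(x)<r$ on $(0,1)$, precisely the bound \eqref{eq:solutionbounded}, whose proof likewise needs only \eqref{eq:supper2}. Next, let $(\mu_1,\varphi)$ be the first eigenpair of the clamped biharmonic operator,
$$\varphi''''=\mu_1\varphi \text{ on }(0,1),\qquad \varphi(0)=\varphi'(0)=\varphi(1)=\varphi'(1)=0,$$
with $\mu_1>0$ and $\varphi>0$ on $(0,1)$. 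Multiplying the equation in \eqref{eq:4order} by $\varphi$ and integrating by parts four times, all boundary contributions drop because $u$ and $\varphi$ share the clamped boundary conditions, giving $\int_0^1 u''''\varphi\,\mathrm{d}x=\int_0^1 u\varphi''''\,\mathrm{d}x$ and hence
$$\mu_1\int_0^1 u\varphi\,\mathrm{d}x=\lambda\int_0^1 f(u)\varphi\,\mathrm{d}x\ge \lambda m\int_0^1\varphi\,\mathrm{d}x.$$
Since $u<r$ and $\varphi>0$ force $\int_0^1 u\varphi\,\mathrm{d}x<r\int_0^1\varphi\,\mathrm{d}x$, cancelling the positive factor $\int_0^1\varphi\,\mathrm{d}x$ leaves $\lambda m<\mu_1 r$, i.e. $\lambda<\mu_1 r/m$. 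Setting $\lambda_0:=\mu_1 r/m$ then excludes positive solutions for $\lambda\ge\lambda_0$.

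The delicate point, and the step I expect to be the main obstacle, is the positivity of the principal eigenfunction $\varphi$: for the clamped plate the first eigenfunction can change sign in general domains. In the present one-dimensional situation this difficulty disappears, since the Green's function of $\mathrm{d}^4/\mathrm{d}x^4$ under clamped boundary conditions is positive --- this is the maximum principle of \cite{chow1973maximum} already invoked in Section \ref{sec:1} --- so the solution operator is positive and compact and the Krein--Rutman theorem furnishes a positive eigenfunction $\varphi$ for the least eigenvalue $\mu_1$. Should one prefer to avoid eigenfunctions altogether, the same positivity result yields an even more elementary route: comparing $u$ with the explicit clamped solution $v(x)=\tfrac{\lambda m}{24}x^2(1-x)^2$ of $v''''=\lambda m$ gives $(u-v)''''=\lambda(f(u)-m)\ge 0$ with clamped data, hence $u\ge v$, so that $r>u(\tfrac12)\ge v(\tfrac12)=\lambda m/384$ and again $\lambda<384\,r/m$.
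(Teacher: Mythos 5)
Your main argument is correct and is essentially the paper's own proof: both multiply the equation by the positive principal eigenfunction of the clamped operator $\mathrm{d}^4/\mathrm{d}x^4$ and integrate by parts, using the lower bound \eqref{eq:f>a} inherited from the proof of Lemma \ref{lem:u bounded}. The only difference in detail is that the paper sharpens \eqref{eq:f>a} to the \emph{linear} lower bound $f(u)\ge \frac{4a}{r^2}u$ (the tangent line to $\frac{a}{r-u}$ through the origin) and cancels $\int_0^1 u\varphi_1\,\mathrm{d}x$ on both sides, whereas you use the \emph{constant} lower bound $f(u)\ge a/r$ together with the a priori bound $u<r$ from \eqref{eq:solutionbounded}; both yield an explicit upper bound for $\lambda$, differing only in the constant. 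Your remarks on the positivity of $\varphi_1$ (via the positive Green's function of \cite{chow1973maximum} and Krein--Rutman) address a point the paper takes for granted, and your alternative comparison argument with $v(x)=\frac{\lambda m}{24}x^2(1-x)^2$ is a valid, more elementary route that avoids the eigenvalue problem altogether at the cost of a worse constant.
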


\begin{proof}
	Since the line $ y=\frac{4}{r^{2}}x $ passing through the origin is a tangent below the curve $ y=\frac{1}{r-x} $, it follows from \eqref{eq:f>a} that
	\begin{equation}\label{eq:f>2}
	f(u)\geq \frac{a}{r-u} \geq \frac{4a}{r^{2}}u \qquad \text{for all } u \in (0, r).
	\end{equation}
	
	With condition \eqref{eq:f>2} in place, the process that follows is routine. Let  $\mu_1>0$  and $\varphi_1(x)>0$ on $(0,1)$ be the principal eigenpair of the problem
	$$\left\{\begin{array}{l}
	\varphi^{\prime \prime \prime \prime}=\mu \varphi \quad \text { in }(0,1); \\
	\varphi(0)=\varphi^{\prime}(0)=0=\varphi(1)=\varphi^{\prime}(1).
	\end{array}\right.
	$$
	Let $u$ be a positive solution of \eqref{eq:4order} with some $\lambda$.
	Multiplying the equation in \eqref{eq:4order} by $\varphi_1$ and integrating over $(0,1)$,  we obtain from
	\eqref{eq:f>2} that
	$$
	\mu_1 \int_0^1 u \varphi_1 \mathrm{d} x= \int_0^1 u \varphi''''_1 \mathrm{d} x = \int_0^1 u'''' \varphi_1 \mathrm{d} x =\lambda \int_0^1 f(u)\varphi_1 \mathrm{d} x \geq \lambda \frac{4a}{r^{2}} \int_0^1 u \varphi_1 \mathrm{d} x.
	$$
It follows that
	$$
	\lambda \leq \frac{r^{2}}{4a}\mu_1.$$
	Therefore, the set of $\lambda$ for which problem \eqref{eq:4order} admits a positive solution is bounded from above.
 Let $\lambda_0$ denote its supremum, and the conclusion follows.
\end{proof}

The following result plays a crucial role in proving the endpoints of the solution curve.
\begin{lemma}\label{lem:endpoint}
Assume that  $f(u)\in C^1[0,r) $  satisfies conditions \eqref{eq:f positive}, \eqref{eq:f increasing} and \eqref{eq:supper2} for some $r<\infty$.
Let $\alpha \in (0, 1)$ and let $\{(\lambda_n,u_n)\}$ be a sequence of solutions of problem \eqref{eq:4order} with $\lambda_n\rightarrow 0$. Then there exist
a subsequence of $\{u_n\}$ (still denoted by $u_n$) and a function $w(x)$
such that
		\begin{equation}\label{eq:endpoints}
	\lim _{n \rightarrow \infty}\|u_n-w\|_{C^{2+\alpha}[0,1]}=0.
	\end{equation}
	Moreover, either $w \equiv 0$ or $\max _{x \in[0,1]} w(x)=r$.
	In the latter case,  $w\in  ( {C}^{2+\alpha}[0,1]\setminus{C}^{3}[0,1])\cap C^{4}( [0,1]\setminus \{ \frac{1}{2}\}  )$ for any $\alpha\in(0,1)$, and $w$ satisfies
\begin{equation}\label{eq:singulareq}
	\left\{\begin{array}{l}
		w^{\prime \prime \prime \prime}(x)=0, \quad x \in[0, \frac{1}{2}) \cup(\frac{1}{2}, 1]; \\
		w(0)=w^{\prime}(0)=0=w(1)=w^{\prime}(1); \\
		w(\frac{1}{2})-r=0=w^{\prime}(\frac{1}{2}),
	\end{array}\right.
	\end{equation}
which is solved uniquely by
	\begin{equation}\label{eq:explicit}
	w(x)= \begin{cases}-16 r x^3+12 r x^2, & x \in[0,\frac{1}{2}]; \\
-16 r (1-x)^3+12 r (1-x)^2, & x \in[\frac{1}{2}, 1].
\end{cases}
	\end{equation}
\end{lemma}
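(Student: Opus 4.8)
The plan is to combine the uniform $C^{3}$ a priori bound with a compactness argument, and then to analyze the limiting profile on subintervals away from the midpoint, where the singularity of $f$ concentrates. Since $\lambda_n\to 0$, the sequence $\{\lambda_n\}$ lies in a \emph{bounded} interval $I\subset[0,\infty)$ (though not one bounded away from $0$), so the first part of Lemma \ref{lem:u bounded} applies and gives $\|u_n\|_{C^3[0,1]}\le C$ uniformly in $n$; crucially, the sharper bounds \eqref{eq:u bound} are \emph{not} available here, which is the source of the difficulty. Using the compact embedding $C^3[0,1]\hookrightarrow\hookrightarrow C^{2+\alpha}[0,1]$, I would pass to a subsequence with $u_n\to w$ in $C^{2+\alpha}[0,1]$, which immediately yields \eqref{eq:endpoints}. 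By uniform $C^2$ convergence, $w$ inherits the symmetry about $x=\tfrac12$ from fact (C), the boundary data $w(0)=w'(0)=w(1)=w'(1)=0$, the monotonicity $w'\ge 0$ on $(0,\tfrac12)$ (so $w$ is nondecreasing on $[0,\tfrac12]$ with $\max w=w(\tfrac12)=:p$), the relation $w'(\tfrac12)=0$, and the inequalities $0\le p\le r$ and $w''(0)\ge 0$ (from \eqref{eq:u2>0}).

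The heart of the argument is to determine $w$ on $[0,\tfrac12)$. On any interval $[0,\tfrac12-\varepsilon]$ on which $w$ stays strictly below $r$, one has $u_n\le w+o(1)<r$ there, so $f(u_n)$ is uniformly bounded; since $u''''_n=\lambda_n f(u_n)$ with $\lambda_n\to 0$, it follows that $u''''_n\to 0$ uniformly, whence $w$ is a cubic with $w'''\equiv A$ constant there. Combined with $w(0)=w'(0)=0$ this forces $w(x)=\tfrac{B}{2}x^2+\tfrac{A}{6}x^3$ on $[0,\tfrac12)$, where $B=w''(0)\ge 0$. To obtain the dichotomy I would first dispose of the case $p<r$: then $u_n<r-\delta$ on \emph{all} of $[0,1]$, so $f(u_n)$ is globally bounded, $u'''_n=\lambda_n\int_{1/2}^{x}f(u_n)\to 0$ uniformly (using $u'''_n(\tfrac12)=0$), whence $A=0$, and then $w'(\tfrac12)=0$ gives $B=0$, so $w\equiv 0$. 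Consequently either $w\equiv 0$ or $p=r=\max w$, as claimed.

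In the case $p=r$ I would next verify $w<r$ on all of $[0,\tfrac12)$: if $w$ reached $r$ at some $x^*<\tfrac12$, monotonicity would force $w\equiv r$ on $[x^*,\tfrac12]$, so $w'(x^*)=w''(x^*)=0$; feeding these into the cubic gives $Bx^*+\tfrac{A}{2}(x^*)^2=0$ and $B+Ax^*=0$, forcing $A=B=0$ and hence $p=0$, a contradiction. Thus the cubic representation holds up to $x=\tfrac12$, and imposing the matching conditions $w(\tfrac12)=r$, $w'(\tfrac12)=0$ yields $\tfrac{B}{8}+\tfrac{A}{48}=r$, $\tfrac{B}{2}+\tfrac{A}{8}=0$, solved by $A=-96r$, $B=24r$. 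This gives $w(x)=12rx^2-16rx^3$ on $[0,\tfrac12]$ and, by symmetry, the reflected cubic on $[\tfrac12,1]$, which is exactly \eqref{eq:explicit}; it manifestly satisfies \eqref{eq:singulareq}, and uniqueness follows since each half-cubic is determined by four linear conditions. For regularity, each branch is a polynomial, so $w\in C^{4}([0,1]\setminus\{\tfrac12\})$ with $w''''=0$; the branches match to second order (a direct computation gives $w''(\tfrac12^\pm)=-24r$), so $w''$ is Lipschitz and $w\in C^{2+\alpha}[0,1]$ for every $\alpha\in(0,1)$, while $w'''(\tfrac12^-)=-96r\ne 96r=w'''(\tfrac12^+)$ shows $w\notin C^3[0,1]$.

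The main obstacle is precisely the interplay between $\lambda_n\to 0$ and the singularity: near $x=\tfrac12$ the product $\lambda_n f(u_n)=u''''_n$ is an indeterminate $0\cdot\infty$ form that the $C^3$ bound cannot tame, so one cannot pass to the limit in the equation across $\tfrac12$. The device resolving this is to extract $w''''=0$ only on compact subintervals $[0,\tfrac12-\varepsilon]$ that avoid the singularity, and then recover the behavior at $\tfrac12$ purely through the $C^{2+\alpha}$ convergence, which transmits the values $w(\tfrac12)=r$ and $w'(\tfrac12)=0$ while permitting the third derivative to jump — the analytic signature of the concentrating singular force.
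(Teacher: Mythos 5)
Your proposal is correct, and its overall skeleton (compactness from the $C^{3}$ bound of Lemma \ref{lem:u bounded}, the dichotomy according to whether $\max w<r$ or $\max w=r$, and the limiting equation $w''''=0$ away from the contact set $\{w=r\}$) coincides with the paper's. Where you genuinely diverge is the key step of showing that the contact set degenerates to the single point $x=\tfrac12$. The paper first proves the auxiliary claim $w\in C^{3}([0,\tfrac12)\cup(\tfrac12,1])$ by bounding $u_n''''$ on compact subintervals (repeating the convexity/monotonicity argument of \eqref{eq:Integral}--\eqref{eq:boundintegral} one derivative higher), so that at a hypothetical contact point $a<\tfrac12$ all of $w(a)-r,w'(a),w''(a),w'''(a)$ vanish; it then integrates $w\cdot w''''=0$ by parts over $(0,a)$, following Lauren\c{c}ot--Walker, to get $\int_0^a(w'')^2=0$ and a contradiction. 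You instead exploit the fact that $w$ is an explicit cubic $\tfrac{B}{2}x^2+\tfrac{A}{6}x^3$ on $[0,a)$ and use only the $C^{2}$ matching $w'(a)=w''(a)=0$ (which comes for free from $w\in C^{2+\alpha}[0,1]$) to force $A=B=0$; this is more elementary and bypasses the extra regularity claim entirely, at the cost of being tied to the one-dimensional polynomial structure, whereas the paper's energy argument is the one that generalizes to the PDE setting of \eqref{eq:biharmonic}. One point you should make explicit: from $u_n''''\to 0$ uniformly on $[0,\tfrac12-\varepsilon]$ and $u_n\to w$ in $C^{2+\alpha}$ alone, the conclusion ``$w$ is a cubic with $w'''\equiv A$'' needs the Taylor/integral representation $u_n''(x)=u_n''(0)+u_n'''(0)x+\int_0^x(x-\xi)u_n''''(\xi)\,\mathrm{d}\xi$ together with convergence of $u_n'''(0)$ along a further subsequence (guaranteed by the $C^3$ bound), exactly as in the paper's \eqref{eq:integral}; as written this step is asserted rather than justified, but it is a routine gap to fill. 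Your concluding computations ($A=-96r$, $B=24r$, the matching of $w''(\tfrac12^{\pm})=-24r$, and the jump $w'''(\tfrac12^{\mp})=\mp 96r$) are all correct.
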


\begin{proof}
The existence of $w \in C^{2+\alpha}[0,1]$ and the convergence relation \eqref{eq:endpoints} follow directly from the a priori bound \eqref{eq:u bound0},
due to the compact embedding $C^{3}[0,1]\hookrightarrow C^{2+\alpha}[0,1]$ for any $\alpha \in [0, 1)$.
As a limit of $C^{2+\alpha}$-convergence, $w$ satisfies
\begin{equation}\label{eq:wbvc}
 w(0)=w^{\prime}(0)=w(1)=w^{\prime}(1)=0=w^{\prime}(\tfrac{1}{2}),
\end{equation}
since every solution satisfies the boundary conditions and the symmetric property.

In view of \eqref{eq:solutionbounded}, we have $ w(x)\leq r$. We divide the discussion into two cases.

\textbf{Case 1: $\max _{x \in[0,1]} w(x)<r$.} We claim that $w \equiv 0$ for all $x \in[0,1]$.

Since in this case $w(x)<r$ for all $x \in[0,1]$, it follows from \eqref{eq:endpoints} that $f(u_n)$ is bounded on $[0,1]$
and $\lambda_n f(u_n) \rightarrow 0$ as $\lambda_n \rightarrow 0$.
Since every solution $(\lambda_n,u_n)$ of \eqref{eq:4order} satisfies the integral form
		\begin{equation}\label{eq:integral}
		{u}_{n}^{\prime \prime}(x)  = {u}_{n}^{\prime \prime }(0) + {u}_{n}^{\prime \prime \prime }(0)x+ {\int}_{0}^{x}{\lambda }_{n}f( {{u}_{n}( \xi ) })(x-\xi) \,\mathrm{d}\xi, \quad
 x \in[0,1].
	\end{equation}
Passing to the limit as $\lambda_n\rightarrow 0$, we conclude from \eqref{eq:u bound0} and \eqref{eq:endpoints} that
	$$
	w^{\prime\prime}(x)=w^{\prime \prime}(0) +\vartheta x,\quad  x \in[0,1],
	$$
where $\vartheta$ is a constant.
Clearly, $w(x) \in C^4[0,1]$ and $w^{\prime \prime \prime \prime}(x)\equiv 0$ on $[0,1]$. It follows from \eqref{eq:wbvc} that the claim is true.

\textbf{Case 2: $\max _{x \in[0,1]}  w(x) = r$.} We prove that $x=\frac{1}{2}$ is the unique maximum point of $w$.

	By the symmetry and the monotonicity of $u_n$ as given in ({C}), it is clear that $w(x) $ is symmetric on $[0,1]$, $ w(\tfrac{1}{2}) = r $, $w(x) $ is non-decreasing on the interval $ (0,\tfrac{1}{2})$ and non-increasing on $( \tfrac{1}{2},1) $.
	So there exists a number $ a \in (0,\tfrac{1}{2}]$ such that $w(x) = r $ for all $x\in [a, 1-a]$
	and $w(x) < r $ for all $x\in [0,a)\cup(1-a,1]$. Moreover, $ w(a) = r= w(1-a)$ and $ w'(a)= 0= w'(1-a)$.
	
For any $ \rho \in (0,a) $,
since  $w(x)<r$ for all $x \in[0,\rho]$, it follows from \eqref{eq:endpoints} that $f(u_n)$ is bounded on $[0,\rho]$.
Similar to Case 1, we have 
\begin{equation}\label{eq:twoseg}
w(x) \in C^4 ([0,a)\cup(1-a,1]) \;\text{ and }\;  w^{\prime\prime\prime\prime}(x) = 0 \; \text{ for all }  x \in [0,a)\cup(1-a,1].
\end{equation}

	We claim that $ w(x) \in C^3([0,\frac{1}{2})\cup (\frac{1}{2},1]) $. In fact, since $ \|u_{n}\|_{C^3[0,1]} $ is bounded and $u^{\prime\prime\prime\prime}_{n}(x)$ is positive and increasing on $(0,1)$, using the arguments as in \eqref{eq:Integral} and \eqref{eq:boundintegral} (Replace $u^{\prime\prime\prime}$ with $u^{\prime\prime\prime\prime}$),
we deduce that for any given  $x\in[0,\frac{1}{2})$, the sequence $u_n^{\prime\prime\prime\prime}(x)$ is bounded and further for any closed subinterval $[\beta,\gamma] \subset [0,\frac{1}{2})$,
$\|u^{\prime\prime\prime\prime}_n\|_{C^0[\beta,\gamma]}$ is bounded. Together with \eqref{eq:solutionbounded}, it follows that $\|u_{n}\|_{C^4[\beta,\gamma]}$ is bounded.
This implies that  $w(x) \in C^3[\beta,\gamma]$ by the compact embedding $C^4[\beta,\gamma]\hookrightarrow C^3[\beta,\gamma]$.
Due to the arbitrariness of $[\beta,\gamma] \subset [0,\frac{1}{2})$, we conclude that $w(x) \in C^3[0, \frac{1}{2})$. Similarly, we have that $w(x) \in C^3(\frac{1}{2},1]$ and the claim is true.

Using an idea from \cite[(2.46)]{Laurencot2014},
we next show that $a=\frac{1}{2}$. Suppose on the contrary that $ a<\frac{1}{2}$. By the claim above, we have
\begin{equation}\label{eq:derivatives}
  0=w(a)-r=w^{\prime}(a)=w^{\prime\prime}(a)=w^{\prime\prime\prime}(a).
\end{equation}
Multiplying \eqref{eq:twoseg} by $ w $, integrating over $ (0,a)$ and using \eqref{eq:derivatives},
	we obtain that
	$$0 = \int_0^{a} w \cdot w'''' \mathrm{d}x  = \int_0^{a} (w'')^2 \mathrm{d}x,$$
	which implies that $ w''(x) \equiv 0 $ on $[0,a] $ and hence $ w(x) = k_1 x + k_2 $. Here, $k_1, k_2$ are constants.
	It follows from \eqref{eq:wbvc} that $ w(x) \equiv 0 $ for $ x \in [0,a] $,
	contradicting \eqref{eq:derivatives}.  
	So $ a = \frac{1}{2} $.

   Consequently, from \eqref{eq:twoseg} we obtain that $w\in C^{4}( [0,1]\setminus \{ \frac{1}{2}\} )$.
Direct integration shows that the explicit function $w(x)$ in \eqref{eq:explicit} uniquely satisfies \eqref{eq:singulareq}. Notably, $w'''(\frac{1}{2})$ is undefined because the third derivatives from the left and right sides are different. Thus, $w\notin C^3[0,1]$.
	\end{proof}

\section{Proof of Theorem \ref{th2:u bounded}}

In this section, we prove Theorem \ref{th2:u bounded}.
  Let us recall a well-known local bifurcation theorem due to Crandall and Rabinowitz \cite[Theorem 3.2]{Crandall1973}.
 \begin{theorem}[\cite{Crandall1973}] \label{th:C-R}
Let $X$ and $Y$ be Banach spaces. Let $ (\bar{ \lambda}  ,\bar{x}   )\in  \mathbb{R} \times X $ and $F$ be a continuously differentiable mapping of an open neighborhood of $ (\bar{ \lambda}, \bar{x})$ into  $Y$.
	Let the null-space  $N(F_{x} (\bar{ \lambda},\bar{x})) = \mathrm{span} \{ x_{0}   \}  $ be a one-dimensional and $\mathrm{codim}R(F_{x}  (\bar{ \lambda}  ,\bar{x}   ))$ $=1$.
	Let $F_{\lambda} ( \bar{\lambda },\bar{x}  ) \notin R( F_{x} ( \bar{\lambda },\bar{x} )  )$. If $Z$ is the complement of $\mathrm{span} \{ x_{0}   \} $ in $X$,
then the solution of $F ( \lambda,x  )=F ( \bar{\lambda },\bar{x}  )$ near $ ( \bar{\lambda },\bar{x}  )$ forms a curve
	$ ( \lambda(s),x(s)  )= ( \lambda+\tau (s),\bar{x}+sx_{0}+z(s) )$,
	where $s\to  ( \tau (s),z(s)  ) \in \mathbb{R}\times X $ is a function that is continuously differentiable near $s=0$ and $ \tau (0) = \tau ^{\prime}(0) = 0, z(0)=z^{\prime}(0)=0.$ Moreover,
	if $F$ is $k$-times continuously differentiable, so are $\tau(s)$ and $z(s)$.
\end{theorem}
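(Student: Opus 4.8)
The plan is to reduce this abstract bifurcation statement to a single application of the implicit function theorem, via a Lyapunov--Schmidt style splitting of both the domain and the target. First I would normalize by translation so that $(\bar\lambda,\bar x)=(0,0)$ and $F(0,0)=0$, and then use the given decomposition $X=\mathrm{span}\{x_0\}\oplus Z$ to write each $x$ near $0$ uniquely as $x=sx_0+z$ with $s\in\mathbb{R}$ and $z\in Z$. On the target side, the codimension-one hypothesis $\mathrm{codim}\,R(F_x(0,0))=1$ lets me fix a closed complement of $R(F_x(0,0))$ in $Y$, a one-dimensional space that I will arrange to contain $F_\lambda(0,0)$.

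Next I would define the auxiliary map $G(\lambda,s,z):=F(\lambda,sx_0+z)$, a $C^1$ mapping from a neighborhood of $(0,0,0)$ in $\mathbb{R}\times\mathbb{R}\times Z$ into $Y$ with $G(0,0,0)=0$. The idea is to solve $G=0$ for the pair $(\lambda,z)$ as a function of the scalar parameter $s$. The relevant partial derivative of $G$ with respect to $(\lambda,z)$ at the origin is the bounded linear map $L\colon\mathbb{R}\times Z\to Y$ given by $L(\mu,v)=\mu F_\lambda(0,0)+F_x(0,0)v$.

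The crux, which I expect to be the main obstacle, is verifying that $L$ is an isomorphism, since this is precisely where all three hypotheses are consumed. For injectivity, if $\mu F_\lambda(0,0)+F_x(0,0)v=0$, then $\mu F_\lambda(0,0)=-F_x(0,0)v\in R(F_x(0,0))$, so the transversality hypothesis $F_\lambda(0,0)\notin R(F_x(0,0))$ forces $\mu=0$; then $F_x(0,0)v=0$ puts $v\in N(F_x(0,0))\cap Z=\mathrm{span}\{x_0\}\cap Z=\{0\}$. For surjectivity, since $x_0\in N(F_x(0,0))$ the restriction $F_x(0,0)|_Z$ has the full range $R(F_x(0,0))$, and adjoining $\mathrm{span}\{F_\lambda(0,0)\}$, a genuine complement by transversality together with the codimension-one hypothesis, recovers all of $Y$. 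Boundedness of the inverse is then automatic from the bounded inverse theorem.

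With $L$ an isomorphism, the implicit function theorem yields unique $C^1$ functions $\tau(s)$ and $z(s)$ near $s=0$ with $\tau(0)=0$, $z(0)=0$ solving $G(\tau(s),s,z(s))=0$, that is $F(\tau(s),sx_0+z(s))=0$; because the splitting $x=sx_0+z$ is bijective, the theorem simultaneously guarantees that this curve exhausts all solutions of $F(\lambda,x)=F(\bar\lambda,\bar x)$ in a neighborhood of the origin. Finally I would differentiate the identity $F(\tau(s),sx_0+z(s))=0$ at $s=0$ to obtain $\tau'(0)F_\lambda(0,0)+F_x(0,0)(x_0+z'(0))=0$; since $F_x(0,0)x_0=0$, the same transversality and range argument forces $\tau'(0)=0$ and then $z'(0)\in N(F_x(0,0))\cap Z=\{0\}$, giving $\tau'(0)=z'(0)=0$. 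The higher-regularity claim is immediate, because if $F\in C^k$ then $G\in C^k$ and the implicit function theorem returns $C^k$ solution functions $\tau$ and $z$.
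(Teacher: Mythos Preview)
Your argument is correct and is, in outline, the standard Lyapunov--Schmidt reduction by which the Crandall--Rabinowitz theorem is proved in the original reference. However, there is nothing to compare against here: the paper does not supply its own proof of this statement. Theorem~\ref{th:C-R} is merely \emph{recalled} from \cite{Crandall1973} as a known tool, introduced with the words ``Let us recall a well-known local bifurcation theorem due to Crandall and Rabinowitz,'' and then applied inside the proof of Theorem~\ref{th2:u bounded}. So your proposal is a valid proof of a result the paper simply cites.

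One small technical remark: your surjectivity step for $L$ silently uses that the range $R(F_x(\bar\lambda,\bar x))$ is closed (so that the one-dimensional span of $F_\lambda$ can serve as a genuine topological complement and the bounded inverse theorem applies). This is standard in the Fredholm setting and is implicit in the hypothesis $\mathrm{codim}\,R(F_x)=1$, but it is worth stating explicitly.
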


\begin{proof}[\textbf{Proof of Theorem \ref{th2:u bounded}}]
Based on the facts ({A})--({D}) and the lemmas in Section \ref{sec:lemmas}, the proof closely follows the original proof of Theorem \ref{thm:korman},
employing the Implicit Function Theorem and Theorem \ref{th:C-R}.
To avoid repetition, we omit the details and refer readers to \cite{Korman2004}.
The new part of our result, concerning the endpoint $w$, follows from Lemma \ref{lem:endpoint}.

For the convenience of the reader, we here briefly outline the main ideas and key points of the proof.
Consider the Banach spaces $X=\{u\in C^4[0,1]\mid u(0)=u(1)=u^{\prime}(0)=u^{\prime}(1)=0\}$ and $Y=C[0,1]$.
Similar to \cite[(2.4)]{Laurencot2014}, we define $F: \mathbb{R}\times \mathcal{O} \rightarrow Y$ by $F(\lambda, u)=u''''-\lambda f(u)$,
where $\mathcal{O}=\{u\in X\mid u<r \text{ in } [0,1]\}$.
Given $f(0)>0$, we may smoothly extend $f$ to a non-negative $C^2$ function on $(-\infty,r)$ and
 the extension does not affect the solution set of positive solutions.
Note that for non-negative $f$, by \cite[Theorem 1]{chow1973maximum}, any non-trivial solution of \eqref{eq:4order} is positive on $(0,1)$.
Starting at the point of the trivial solution $(\lambda = 0,u = 0)$, we derive the desired solution curve $\mathcal{S}$ by the continuation approach, relying on the Implicit Function Theorem (at regular points) and the Crandall-Rabinowitz theorem (at possible singular or turning points) to smoothly continue the curve. While Lemma \ref{lem:l2} indicates that
the solution curve cannot continue infinitely in the direction of increasing $\lambda$, a priori bounds \eqref{eq:u bound} in Lemma \ref{lem:u bounded}
implies that this curve cannot stop at any $\lambda>0$, nor can it have a vertical asymptote at any $\lambda>0$.
Furthermore, by the formula of the bifurcation direction at singular points, this curve must turn left at each possible singular point because of the convexity of $f$.
Therefore, the solution curve continues globally and admits exactly a turn at some
critical point $(\lambda_0,u_0)$. After bending back at $(\lambda_0,u_0)$,
Lemma \ref{lem:endpoint} states that when $\lambda \downarrow 0$,
there are two possible behaviors for the solution curve: it converges to either $(0,0)$ or $(0,w)$.
However, the uniqueness of solutions near the origin $(0,0)$ excludes the convergence to $(0,0)$, according to the Implicit Function Theorem.
Here, $w(x)$ is explicitly given by \eqref{eq:explicit} and its maximum value is $r$.
According to (D), all positive solutions are globally parameterized by $p:=u(\frac{1}{2})=\left \| u \right \|_\infty$.
From the solution curve $\mathcal{S}$, we immediately obtain a smooth global bifurcation curve
$$\mathcal{C}=\{(\lambda,\left \| u \right \|_\infty) \mid \lambda \text{ and } u \text{ satisfy } \eqref{eq:4order} \}.$$
Along this curve, the parameter $p$ monotonically increases; see Figure \ref{fig:fig1}(ii).
Moreover, since the curve parameterizes all solutions as $p$ ranges from $0$ to $r$,  it follows that there are no additional solution branches.
\end{proof}

\section{Concluding Remarks}
In this paper, we have established a global bifurcation result for the fourth-order equation with doubly-clamped boundary conditions, assuming the nonlinearity $f$
is increasing and convex. We have derived the complete structure of the solution set, revealing the exact multiplicity of positive solutions. The corresponding bifurcation diagram is depicted in Figure \ref{fig:fig1}(ii). Examples of fourth-order MEMS models arising from the recent monograph \cite{Koochi2020} have been presented to illustrate applications of the main theorem. Additionally, we have built the a priori estimate $\|u\|_{C^3}<C$, which is optimal in terms of H\"{o}lder regularity. Based on the crucial estimate, we have demonstrated that the regular solutions converges to an explicit bounded function in $ {C}^{2+\alpha}\setminus C^3[0,1]$ along the upper branch of the solution curve as $\lambda\rightarrow 0$.

We suggest two promising topics for further research.
\begin{enumerate}
	\item Consider a more general MEMS model than \eqref{eq:4order}:
$$
\left\{\begin{array}{l}
u^{\prime\prime\prime\prime}(x)-Tu^{\prime\prime}(x)=\lambda f(u(x)), \quad x\in (0,1),T\geq 0;
\\
u(0)=u(1)=0=u^{\prime}(0)=u^{\prime}(1).
\end{array}\right.
$$
Some interesting problems remain to be addressed in
establishing analogous results to Theorem \ref{th2:u bounded} for the cases when $T> 0$ and without
the assumption $f'>0$.
Major difficulties include establishing suitable a priori bounds and achieving global parametrization of the solution curve via its maximum.	
	\item Consider the fourth-order regularized MEMS model
	arising in \cite[(3.13b)]{Lindsay2014}:
	$$
	\left\{\begin{array}{l}
	u^{\prime \prime \prime \prime}(x)=
	\frac{\lambda}{(1-u)^{2}}-\frac{\lambda\varepsilon^{m-2} }{(1-u)^{m}}, \quad x \in(0,1); \\
	u(0)=u(1)=0=u^{\prime}(0)=u^{\prime}(1),
	\end{array}\right.
	$$
	where $\varepsilon>0$ and $m>2$. Compared to the known increasing and convex nonlinearity,  $f(u)=\frac{1}{(1-u)^{2}}-\frac{\varepsilon^{m-2} }{(1-u)^{m}}$ here is a non-monotonic and convex-concave function. The study of global bifurcation curves becomes more challenging. In contrast to the $\supset$-shaped curve when $\varepsilon=0$, numerical bifurcation diagrams in \cite[Figure 4]{Lindsay2014} for $m=4$ exhibit $S$-shaped curves appearing for small positive values of $\varepsilon$, but the strict proof remains unknown.
The present Theorem \ref{thm:korman} might be helpful in solving this problem.
\end{enumerate}

\section*{Acknowledgment}
The second author is partially supported by Guangdong Basic and Applied Basic Research Foundation (Grant No.2022A1515011867), which is gratefully acknowledged.


\begin{thebibliography}{10}

\bibitem{Buffoni2000}
{B.~Buffoni, E.~N. Dancer, and J.~F. Toland}, {\em The regularity and local bifurcation of steady periodic water waves}, Arch. Ration. Mech. Anal., Vol.152 (2000), 207--240.

\bibitem{Cassani2009}
{D.~Cassani, J.~M. do~{\'O}, and N.~Ghoussoub}, {\em On a fourth order elliptic problem with a singular nonlinearity}, Advanced Nonlinear Studies, Vol.9 (2009), 177--197.

\bibitem{chow1973maximum}
{S.-N.~Chow, D.~R. Dunninger, and A.~Lasota}, {\em A maximum principle for fourth order ordinary differential equations}, J. Differential Equations, vol.14 (1973), 101–105.

\bibitem{Cowan2010}
{C.~Cowan, P.~Esposito, N.~Ghoussoub, and A.~Moradifam}, {\em The critical dimension for a fourth-order elliptic problem with singular nonlinearity}, Arch. Ration. Mech. Anal., Vol.198 (2010), 763--787.

\bibitem{Crandall1973}
{M.~G. Crandall and P.~H. Rabinowitz}, {\em Bifurcation, perturbation of simple eigenvalues, and linearized stability}, Arch. Ration. Mech. Anal., Vol.52 (1973), 161--180.

\bibitem{Esposito2010}
{P.~Esposito, N.~Ghoussoub, and Y.~Guo}, {\em Mathematical analysis of partial differential equations modeling electrostatic {MEMS}}, Amer. Math. Soc., Providence, RI; Courant Inst. Math. Sci., NY., 2010.

\bibitem{Guo2014}
{Z.~Guo, B.~Lai, and D.~Ye}, {\em Revisiting the biharmonic equation modelling electrostatic actuation in lower dimensions}, Proc. Amer. Math. Soc., Vol.142 (2014), 2027--2034.

\bibitem{Guo2008}
{Z.~Guo and J.~Wei}, {\em Entire solutions and global bifurcations for a biharmonic equation with singular non-linearity in {$\Bbb R^3$}}, Adv. Differential Equations, Vol.13 (2008), 753--780.

\bibitem{Guo2009}
{Z.~Guo and J.~Wei}, {\em On a fourth order
  nonlinear elliptic equation with negative exponent}, SIAM J. Math. Anal., Vol.40 (2009), 2034--2054.

\bibitem{Koochi2020}
{A.~Koochi and M.~Abadyan}, {\em Nonlinear Differential Equations in
  Micro/Nano Mechanics: Application in Micro/nano Structures and
  Electromechanical Systems}, Elsevier, 2020.

\bibitem{Korman2004}
{P.~Korman}, {\em Uniqueness and exact multiplicity of solutions for a class of fourth-order semilinear problems}, Proc. Roy. Soc. Edinburgh Sect. A, Vol.134 (2004), 179--190.

\bibitem{Laurencot2014}
{P.~Lauren\c{c}ot and C.~Walker}, {\em A fourth-order model for {MEMS} with clamped boundary conditions}, Proc. Lond. Math. Soc. (3), Vol.109 (2014), 1435--1464.

\bibitem{Laurencot2017}
{P.~Lauren\c{c}ot and C.~Walker}, {\em Some singular
  equations modeling {MEMS}}, Bull. Amer. Math. Soc. (N.S.), Vol.54 (2017), 437--479.

\bibitem{Lin2007}
{F.~Lin and Y.~Yang}, {\em Nonlinear non-local elliptic equation modelling electrostatic actuation}, Proc. R. Soc. Lond., Ser. A, Math. Phys. Eng. Sci., Vol.463 (2007), 1323--1337.

\bibitem{Lindsay2014}
{A.~E. Lindsay, J.~Lega, and K.~B. Glasner}, {\em Regularized model of post-touchdown configurations in electrostatic {MEMS}: equilibrium analysis}, Physica D, Vols.280-281 (2014), 95--108.

\bibitem{Liu2022}
{T.~Liu and H.~Pan}, {\em Global bifurcation curve for fourth-order {MEMS}/{NEMS} models}, Differential Integral Equations, Vol.35 (2022), 437--450.

\bibitem{pelesko2002modeling}
{J.~A. Pelesko and D.~H. Bernstein}, {\em Modeling {MEMS} and {NEMS}}, CRC Press, 2002.

\bibitem{Rynne04}
{B.~P. Rynne}, {\em Solution curves of 2m-th order boundary-value problems}, Electron. J. Differential Equations, (2004), Id/No.~32, 16 pp.
\end{thebibliography}
\end{document}